\newtheorem{theorem}{Theorem}[section]{\bf}{\it}
\newtheorem{lemma}[theorem]{Lemma}{\bf}{\it}
\newtheorem{proposition}[theorem]{Proposition}{\bf}{\it}
\newtheorem{corollary}[theorem]{Corollary}{\bf}{\it}
{\bf}{\it} 
{\bf}{\it}
{\bf}{\it}
\newtheorem*{theorem*}{Theorem}
\newtheorem*{namedtheorem}{\theoremname}
\newcommand{\theoremname}{testing}
\theoremstyle{remark}
\theoremstyle{definition}
\theoremstyle{remark}
\numberwithin{equation}{section}
\newcommand{\R}{\mathbb R}
\newcommand{\Z}{\mathbb Z}
\newcommand{\loc}{{\operatorname{loc}}}
\renewcommand{\div}{\operatorname{div}}
\newcommand{\pr}{{\operatorname{pr}}}
\newdimen\vintkern\vintkern11pt
\def\vint{-\kern-\vintkern\int}
\newcommand{\dx}{\;\mathrm{d}x}
\newcommand{\dy}{\;\mathrm{d}y}
\newcommand{\norm}[1]{\lVert #1 \rVert}
\newcommand{\abs}[1]{\lvert#1\rvert}
\newcommand{\grad}{\nabla}
\newcommand{\bS}{\mathbb{S}}
\newcommand{\vol}{\mathrm{vol}}
\DeclareMathOperator{\Div}{div}
\begin{document}

\title[]{Quasiregular curves: H\"older continuity and higher integrability}

\author{Jani Onninen}
\address{Department of Mathematics, Syracuse University, Syracuse,
NY 13244, USA \and  Department of Mathematics and Statistics, P.O.Box 35 (MaD) FI-40014 University of Jyv\"askyl\"a, Finland}
\email{jkonnine@syr.edu}

\author{Pekka Pankka}
\address{Department of Mathematics and Statistics, P.O. Box 68 (Pietari Kalmin katu 5), FI-00014 University of Helsinki, Finland}
\email{pekka.pankka@helsinki.fi}

\thanks{This work was supported in part by the Academy of Finland project \#297258 and the NSF grant  DMS-1700274.}
\subjclass[2010]{Primary  30C65; Secondary 32A30, 53C15.}
\keywords{Quasiregular curves, quasiregular mappings, holomorphic curves}

\date{\today}

\dedicatory{Dedicated to Pekka Koskela on the occasion of his 60th birthday.}

\begin{abstract}
We show that a $K$-quasiregular $\omega$-curve from a Euclidean domain to a Euclidean space with respect to a covector $\omega$ is locally $(1/K)(\norm{\omega}/|\omega|_{\ell_1})$-H\"older continuous. We also show that quasiregular curves enjoy higher integrability.
\end{abstract}

\maketitle

\setcounter{tocdepth}{3}

\section{Introduction}

The first breakthrough in the theory of quasiregular mappings (or mappings of bounded distortion) is  Reshetnyak's theorem on sharp H\"older continuity: \emph{Let $\Omega \subset \R^n$ be a domain. A $K$-quasiregular mapping $f\colon \Omega \to \R^n$ for $K\ge 1$ is locally $1/K$-H\"older continuous}, see Reshetnyak \cite{Reshetnyak} and also \cite[Corollary II.1]{Reshetnyak-book}. Such H\"older continuity properties of quasiconformal mappings in the plane were
first established by Morrey~\cite{Morrey}. 

Recall that a mapping $f\colon M \to N$ between oriented Riemannian $n$-manifolds is \emph{$K$-quasiregular} if $f$ belongs to the Sobolev space $W^{1,n}_\loc(M, N)$ and satisfies the distortion inequality
\begin{equation}\label{eq:distortion}
\norm{Df}^n \le K J_f 
\end{equation}
almost everywhere in $M$, where $\norm{Df}$ is the operator norm and $J_f$ the Jacobian determinant of $f$.

In the last 20 years the studies of mappings of finite distortion have emerged into  the Geometric Function Theory (GFT)~\cite{Astala-Iwaniec-Martin-book, Hencl-Koskela-book, Iwaniec-Martin-book}. This theory arose from the need to extend the ideas and applications
of the classical theory of quasiregular mappings to the degenerate elliptic setting where the constant $K$ in~\eqref{eq:distortion} is replaced by a finite  function $K \colon M \to [0,\infty)$. There one
finds concrete applications in materials science, particularly nonlinear elasticity and critical phase
phenomena, and in the calculus of variations.  Some bounds on the distortion function $K$ are needed to obtain a viable theory. In the Euclidean degenerated setting, the continuity properties of mappings of finite distortion  under distortion bounds of exponential
type were obtained in~\cite{Iwaniec-Koskela-Onninen-Invent}. The sharp modulus of continuity estimates for such mappings were given in~\cite{Onninen-Zhong-Continuity}, see also~\cite{Koskela-Onninen-continuity}. The paper~\cite{Iwaniec-Koskela-Onninen-Invent} in addition to starting a systematic studies of mappings of finite distortion  in GFT it also started  the naming scheme of the series of papers, see e.g.~\cite{Akkinen-Guo-mfd, Campbell-Hencl-mfd, Clop-Herron-mfd, Faraco-Koskela-Zhong-mfd,  Guo-mfd0, Guo-mfd, Hencl-Maly-mfd, Herron-Koskela-mfd, Holopainen-Pankka-mfd, IKMS-mfd, IKO-mfd,  Kallunki-mfd,  KKM-mfd-1, KKM-mfd, KKMOZ-mfd,  Kirsila-mfd, Klerptl-mfd, Koskela-Maly-mfd, Koskela-Onninen-continuity, Koskela-Onninen-mfd, Koskela-Onninen-Rajala-mfd-1,  Koskela-Onninen-Rajala-mfd, Koskela-Rajala-mfd,   Koskela-Takkinen-mfd, Koskela-Zapadinskaya-Zurcher-mfd, Onninen-mfd-1, Onninen-mfd, Onninen-Zhong,  Rajala-mfd-1, Rajala-mfd-2, Rajala-mfd}. This paper follows such a scheme.
 
\bigskip


In this note we prove H\"older continuity and higher integrability of quasiregular curves. A mapping $f\colon M\to N$ between Riemannian manifolds is a \emph{$K$-quasiregular $\omega$-curve for $K\ge 1$ and an $n$-volume form $\omega \in \Omega^n(N)$} if $M$ is oriented, $n=\dim M \le \dim N$, $f$ belongs to the Sobolev space $W^{1,n}_\loc(M,N)$ and 
\[
(\norm{\omega}\circ f)\norm{Df}^n \le K \star (f^*\omega)
\]
almost everywhere in $M$, where $\norm{\omega}\colon N \to [0,\infty)$ is the pointwise comass norm of the form $\omega$ and $\star$ is the Hodge star operator on $M$. Here, a form $\omega\in \Omega^n(N)$ is an \emph{$n$-volume form} if $\omega$ is closed and non-vanishing, that is, $d\omega =0$ and $\omega_y \ne 0$ for each $y\in N$. 

We refer to \cite{Pankka2019} for discussion on the definition of quasiregular curves. We merely note here that quasiregular mappings are quasiregular curves and that holomorphic curves are $1$-quasiregular curves.


Our main theorem is the H\"older regularity of a quasiregular $\omega$-curve in the case of the constant coefficient form $\omega$. Note that, in the following statement, we identify $n$-covectors in $\bigwedge^n \R^m$ with constant coeffient $n$-volume forms in $\R^m$.

\begin{theorem}
\label{thm:main-vague}
Let $\Omega \subset \R^n$ be a domain, $K \ge 1$, and let $\omega\in \bigwedge^n \R^m$ be an $n$-volume form. Then a $K$-quasiregular $\omega$-curve $f\colon \Omega \to \R^m$ is locally $\alpha$-H\"older continuous for $\alpha=\alpha(K,\omega) = (1/K)(\norm{\omega}/|\omega|_{\ell_1})$. 
\end{theorem}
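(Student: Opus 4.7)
The plan is to derive a Caccioppoli-type inequality exploiting the exactness of the constant-coefficient $n$-form $\omega$, and then to iterate it by a Morrey--Campanato scheme to obtain H\"older continuity with the claimed exponent. Writing $\omega = \sum_I \omega_I \, dy_I$ in the standard basis indexed by increasing multi-indices $I=(i_1<\dots<i_n)$, one notes that $d\omega = 0$ and that the coordinate-wise primitive
\[
\eta_c := \sum_I \omega_I \,(y_{i_1} - c_{i_1})\,dy_{i_2}\wedge\cdots\wedge dy_{i_n}
\]
satisfies $d\eta_c = \omega$ for every basepoint $c\in\R^m$. Crucially, the pullback enjoys the pointwise bound $|f^*\eta_c|_x \leq |\omega|_{\ell_1}\,|f(x)-c|\,\|Df(x)\|^{n-1}$, which is precisely the origin of the factor $|\omega|_{\ell_1}/\|\omega\|$ in the final H\"older exponent.

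Next, fix $x_0\in\Omega$ and $R>0$ with $B(x_0,R)\Subset\Omega$, set $c=f(x_0)$, and choose a cutoff $\phi\in C_c^\infty(B(x_0,R))$. Stokes' theorem applied to $d(\phi^n f^*\eta_c) = \phi^n f^*\omega + n\phi^{n-1}\,d\phi\wedge f^*\eta_c$ yields $\int \phi^n f^*\omega = -n\int \phi^{n-1}\,d\phi\wedge f^*\eta_c$. Combining this identity with the distortion inequality $\|\omega\|\,\|Df\|^n \leq K\,\star(f^*\omega)$, the pointwise bound on $f^*\eta_c$, and H\"older's inequality produces the Caccioppoli estimate
\[
\int_\Omega \phi^n \|Df\|^n\,dx \leq \left(\frac{n K\, |\omega|_{\ell_1}}{\|\omega\|}\right)^n \int_\Omega |d\phi|^n\,|f-f(x_0)|^n\,dx.
\]
Inserting the standard logarithmic radial cutoff equal to $1$ on $B(x_0,r)$ and vanishing outside $B(x_0,R)$, and bounding $|f-f(x_0)|$ on the support of $d\phi$ by $\operatorname{osc}(f,B(x_0,R))$, one obtains
\[
\int_{B(x_0,r)} \|Df\|^n\,dx \leq C_n\left(\frac{K\,|\omega|_{\ell_1}}{\|\omega\|}\right)^n \frac{\operatorname{osc}(f,B(x_0,R))^n}{\log^{n-1}(R/r)}.
\]

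A Morrey--Campanato-type iteration on this estimate is then meant to upgrade it to the polynomial growth bound $\int_{B(x_0,r)} \|Df\|^n \leq M\,r^{n\alpha}$ with the claimed exponent $\alpha = (1/K)(\|\omega\|/|\omega|_{\ell_1})$, from which the classical Morrey characterization of H\"older continuity delivers the local $\alpha$-H\"older regularity of $f$. The main obstacle is closing this iteration so that the emerging exponent is exactly the sharp $\alpha$ and not some smaller value. The Morrey step a priori requires a polynomial growth estimate for the oscillation, which is precisely what one is trying to prove; to break this circle I would start from the continuity of $f$ (itself a consequence of the Sobolev regularity together with the non-negativity of $\star(f^*\omega)$), and use the companion higher-integrability result of this paper to boost $\|Df\|\in L^n_{\operatorname{loc}}$ to $L^{n+\varepsilon}_{\operatorname{loc}}$, obtaining an initial positive H\"older exponent $\alpha_0$ via Morrey embedding. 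Iterating the Caccioppoli--Morrey cycle, with the cutoff's logarithmic scale tied to the current H\"older exponent (concretely, $\log(R/r)\sim (n-1)/(n\alpha_j)$ so as to optimize the bound along the envelope), one expects the sequence $\alpha_j$ to converge monotonically to the fixed point $\alpha$; verifying that this fixed point is indeed the stated sharp value, rather than something smaller inherited from the starting data, is the most delicate step of the argument.
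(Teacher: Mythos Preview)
Your Caccioppoli inequality is correct (and is essentially Lemma~6.1 of the paper, used there only for higher integrability), but the iteration you propose cannot recover the sharp exponent. If one assumes $|f(x)-f(x_0)|\le L|x-x_0|^\beta$ and feeds this into your estimate with the logarithmic cutoff on the annulus $B_R\setminus B_r$, one obtains
\[
\int_{B_r}\norm{Df}^n \;\le\; \frac{C_1 L^n \omega_{n-1}}{n\beta}\cdot\frac{R^{n\beta}-r^{n\beta}}{\log^{n}(R/r)}.
\]
Optimizing over $R=re^t$ gives at best a bound of the form $C(\beta)\,r^{n\beta}$: the exponent does not improve, so every $\beta$ is a fixed point of your map and the sequence $\alpha_j$ stalls at the initial $\alpha_0$ coming from higher integrability. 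There is no mechanism in the Caccioppoli route that raises the decay exponent, because Caccioppoli controls $\int_{B_r}\norm{Df}^n$ by an \emph{oscillation} of $f$ on a larger ball, and the Morrey step turns that oscillation back into the same decay of $\int\norm{Df}^n$.

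The paper avoids this circularity by a different device: an $\omega$-isoperimetric inequality (Proposition~4.1) bounding $\int_{B_r} f^*\omega$ by $c_n|\omega|_{\ell_1}\bigl(\int_{\partial B_r}\norm{D^\sharp f}\bigr)^{n/(n-1)}$. Combined with the distortion inequality, Hadamard's inequality $\norm{D^\sharp f}\le\norm{Df}^{n-1}$, and H\"older's inequality on the sphere $\partial B_r$, this yields directly
\[
\Phi(r):=\int_{B_r}\norm{Df}^n \;\le\; \frac{r}{n}\,\frac{|\omega|_{\ell_1}}{\norm{\omega}}\,K\,\Phi'(r),
\]
a differential inequality that integrates in one step to $\Phi(r)\le (r/R)^{n\alpha}\Phi(R)$ with $\alpha=(1/K)\norm{\omega}/|\omega|_{\ell_1}$, and Morrey's lemma finishes. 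The key structural point you are missing is that the isoperimetric inequality compares the volume integral to a \emph{surface} integral, i.e.\ to $\Phi'(r)$ rather than to an oscillation, and it is this that closes the ODE with the sharp constant.
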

Here $|\omega|_{\ell_1}$ is the $\ell_1$-norm of the covector $\omega$; see Section \ref{sec:Notation}. For simple covectors, we recover the exponent $1/K$, which follows also from the local characterization of quasiregular curves with respect to simple covectors, see \cite{Pankka2019}. We expect that the H\"older exponent $\alpha(K,\omega)$ is not sharp in general. In fact, all examples of quasiregular curves we know are $1/K$-H\"older continuous.

Since a quasiregular curve is locally a quasiregular curve with respect to a constant coeffcient form by \cite[Lemma 5.2]{Pankka2019}, we obtain that quasiregular curves between Riemannian manifolds are locally H\"older continuous. We record this observation as a corollary.

\begin{corollary}
\label{cor:Holder}
Let $M$ and $N$ be Riemannian $n$ and $m$-manifolds, respectively, for $n\le m$, and let $\omega\in \Omega^n(N)$ be an $n$-volume form. Then each $K$-quasiregular $\omega$-curve $M\to N$ is locally $\alpha(K',\omega)$-H\"older continuous for each $K'>K$. 
\end{corollary}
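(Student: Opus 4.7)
The plan is to deduce this from Theorem \ref{thm:main-vague} by combining the localization result \cite[Lemma 5.2]{Pankka2019} mentioned in the paragraph preceding the corollary with a bi-Lipschitz comparison between the Riemannian and Euclidean metrics on small coordinate charts. The loss from $K$ to any prescribed $K'>K$ is designed to absorb two sources of perturbation: the metric distortion introduced when charts are used, and the approximation of $\omega$ by a constant coefficient form near a point. Both effects are multiplicative and can be made arbitrarily close to $1$ by shrinking the chart.

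Concretely, I would fix a point $x_0\in M$, choose smooth charts $\varphi\colon U \to \Omega\subset \R^n$ around $x_0$ and $\psi\colon V \to \Omega'\subset \R^m$ around $f(x_0)$ with $f(U)\subset V$, and shrink them so that both charts are $(1+\delta)$-bi-Lipschitz between the corresponding Riemannian metrics and the Euclidean metrics, where $\delta>0$ is chosen in terms of $K$ and $K'$. On the image side I would apply \cite[Lemma 5.2]{Pankka2019} to obtain a constant coefficient $n$-volume form $\omega_0\in \bigwedge^n\R^m$ such that the pushforward $\tilde f = \psi\circ f\circ \varphi^{-1}\colon \Omega\to \R^m$ satisfies the Euclidean $K'$-quasiregular $\omega_0$-curve inequality, and such that the norms $\norm{\omega_0}$ and $|\omega_0|_{\ell_1}$ agree with the corresponding pointwise norms of $\omega$ at $f(x_0)$. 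Theorem \ref{thm:main-vague} then yields local $\alpha(K',\omega_0)$-H\"older continuity of $\tilde f$ on $\Omega$, and transferring this back through the bi-Lipschitz charts gives local $\alpha(K',\omega)$-H\"older continuity of $f$ near $x_0$ in the Riemannian metrics, the exponent being unchanged by bi-Lipschitz changes of coordinates. A standard covering argument then extends the local estimate to any precompact subset of $M$.

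The main technical issue, which forces the statement to require $K'>K$ rather than $K$, is the bookkeeping in the reduction. One has to verify that the Riemannian distortion inequality $(\norm{\omega}\circ f)\norm{Df}^n\le K\star(f^*\omega)$ transforms, under the pullback by the charts and the replacement of $\omega$ by $\omega_0$, into the Euclidean inequality $(\norm{\omega_0}\circ \tilde f)\norm{D\tilde f}^n\le K'\star(\tilde f^*\omega_0)$. The operator norm, the Jacobian, the pullback of the form, and the pointwise comass all pick up multiplicative error factors bounded by fixed powers of $1+\delta$, while the $C^0$-closeness of $\omega$ to $\omega_0$ on a small neighborhood of $f(x_0)$ contributes another factor that tends to $1$. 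Choosing $\delta$ and the neighborhood so that the product of all error factors is at most $K'/K$ yields the required inequality and completes the reduction to Theorem \ref{thm:main-vague}.
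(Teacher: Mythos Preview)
Your proposal is correct and follows essentially the same approach as the paper: reduce to the Euclidean setting via nearly-isometric charts, invoke \cite[Lemma 5.2]{Pankka2019} to freeze the coefficients of the pushed-forward form, and apply Theorem~\ref{thm:main-vague}. The paper's proof differs only in that it separates the two sources of error by passing through an intermediate $K''\in(K,K')$ (the chart distortion takes $K$ to $K''$, and the coefficient-freezing lemma takes $K''$ to $K'$), whereas you absorb both into a single step; your additional remark on matching $\norm{\omega_0}$ and $|\omega_0|_{\ell_1}$ with the pointwise norms of $\omega$ is a detail the paper leaves implicit.
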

\begin{proof}
Let $K'' \in (K,K')$ and let $\varepsilon>0$ be a constant for which $(1+\varepsilon)^{4n} < K''/K$. Let $x\in M$ and let $\varphi \colon U \to \R^n$ and $\psi \colon V\to \R^m$ be smooth $(1+\varepsilon)$-charts of $M$ and $N$ at $x$ and $f(x)$, respectively, having the property that $fU \subset V$. Then $h = \psi \circ f \circ \varphi^{-1} \colon \varphi U \to \R^m$ is a $K''$-quasiregular $\widetilde \omega$-curve for $\widetilde \omega = (\psi^{-1})^* \omega$. By \cite[Lemma 5.2]{Pankka2019}, for each $x\in M$, then $h$ is a $K'$-quasiregular $\widetilde \omega_x$-curve with respect to the covector $\widetilde \omega_x$ in a neighborhood of $x$. The claim follows now from Theorem \ref{thm:main-vague}.
\end{proof}

In the proof of Theorem \ref{thm:main-vague} we mimic the lines of reasoning of the original proofs of Reshetnyak's theorem by Morrey~\cite{Morrey} and Reshetnyak~\cite{Reshetnyak}. For quasiregular $\omega$-curves $\Omega\to \R^m$, where $\omega$ is a constant coefficient form or a covector $\omega\in \bigwedge^n \R^m$, we prove a decay estimate on the integrals of  $\star f^*\omega$  of the quasiregular curve $f$  over balls by establishing a differential inequality for these integrals. This is done by employing a suitable isoperimetric inequality.  For this reason, we recall the classical isoperimetric inequality for Sobolev mappings in Section \ref{sec:classical} and derive an $\omega$-isoperimetric inequality in Section \ref{sec:omega}.

\subsection*{Higher integrability of quasiregular curves}
We switch now gears and consider another classical property of quasiregular mappings. Quasiconformal and quasiregular mappings $f\colon \Omega \to \R^n$, $\Omega \subset \R^n$, belong to a higher Sobolev class $W_{\loc}^{1,p} (\Omega)$, $p>n$,  than initially assumed. The sharp exponent $p=p(n,K)$ is not known. A well-known conjecture asserts that
\[
p(n,K)= \frac{nK^\frac{1}{n-1}}{K^\frac{1}{n-1}-1}.
\]
This value, if correct, would be sharp as confirmed  by the radial stretch mapping $f(x)= |x|^\frac{1}{K} \frac{x}{|x|}$. In a seminal work, Astala~\cite{Astala-ACTA} established the sharp exponent  in the planar case $n=2$. There are more recent accounts on the higher intergability results when $n\ge 3 $, we refer here to the celebrated paper of Gehring \cite{Gehring} for the quasiconformal case. In the quasiregular case, we find that the discussion in Bojarski--Iwaniec \cite{Bojarski-Iwaniec} has stood the test of time.

As Bojarski and Iwaniec write in \cite[p.272]{Bojarski-Iwaniec}, the higher integrability of a $K$-quasiregular map $f\colon M \to N$ stems from the double inequality
\[
J_f \le \norm{Df}^n \le K J_f \text{ a.e. in } M
\]
and (standard) harmonic analysis. For a $K$-quasiregular $\omega$-curve $f\colon M\to N$ between Riemannian manifold the analogous double inequality is
\[
\star f^*\omega \le (\norm{\omega}\circ f) \norm{Df}^n \le K \left( \star f^*\omega\right) \text{ a.e. in } M.
\]
The proof of the higher integrability of quasiregular mappings adapts almost synthetically for quasiregular curves.

\begin{theorem}
\label{thm:higher}
Let $f\colon \Omega \to \R^m$ be a $K$-quasiregular $\omega$-curve, where $\omega \in \bigwedge^n \R^m$ is an $n$-volume form. Then $f\in W^{1,p}_\loc(\Omega, \R^n)$ for some $p=p(n,K,\omega)>n$.
\end{theorem}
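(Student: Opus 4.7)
The plan is to adapt the classical Bojarski--Iwaniec \cite{Bojarski-Iwaniec} proof of higher integrability for quasiregular mappings, substituting the Jacobian $J_f$ by $\star f^*\omega$ and invoking the double inequality displayed just before the theorem. The target is a weak reverse Hölder inequality
\[
\left( \vint_{B/2} \norm{Df}^n \dx \right)^{1/n} \le C(K,\omega) \left( \vint_{B} \norm{Df}^{n/2} \dx \right)^{2/n}
\]
on every Euclidean ball $B \Subset \Omega$ (with $B/2$ the concentric half-radius ball). Gehring's self-improving lemma then upgrades $\norm{Df}$ from $L^n_\loc$ to $L^p_\loc$ for some $p = p(n,K,\omega) > n$.

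The key step is a Caccioppoli-type estimate
\[
\int_\Omega \varphi^n \norm{Df}^n \dx \le C(K,\omega) \int_\Omega |f-c|^n\, |\nabla \varphi|^n \dx \qquad (c \in \R^m,\; \varphi \in C_c^\infty(\Omega)).
\]
Here the constant coefficient hypothesis on $\omega$ enters crucially: writing $\omega = \sum_{|I|=n} a_I\, dy^I$, the explicit linear primitive
\[
\eta_c(y) = \sum_I a_I (y_{I_1}-c_{I_1})\, dy^{I_2}\wedge\cdots \wedge dy^{I_n}
\]
satisfies $d\eta_c = \omega$ and the pointwise bound $|f^*\eta_c|(x) \le C_\omega |f(x)-c|\, \norm{Df(x)}^{n-1}$ by Hadamard's inequality applied to $(n-1)\times(n-1)$ minors of $Df$. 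Distributional Stokes---valid for $f \in W^{1,n}_\loc$ by mollification, since all integrands lie in $L^1_\loc$---then yields
\[
\int \varphi^n \star f^* \omega \dx = \int \varphi^n f^*\omega = -n \int \varphi^{n-1} \, d\varphi \wedge f^*\eta_c.
\]
Combined with $\norm{\omega} \norm{Df}^n \le K \star f^*\omega$ and Young's inequality (to absorb the resulting $\varphi^n \norm{Df}^n$ term with exponent pair $(n/(n-1), n)$), this produces the Caccioppoli estimate. Choosing next a standard cutoff with $\varphi \equiv 1$ on $B/2$, $\spt \varphi \subset B$, and $|\nabla\varphi|\le C/r$, together with $c=f_B$, and feeding the result into the Sobolev--Poincaré inequality
\[
\left( \vint_B |f-f_B|^n \dx \right)^{1/n} \le C r \left( \vint_B \norm{Df}^{n/2} \dx \right)^{2/n}
\]
(whose exponent pair is legitimate because $(n/2)^* = n$) delivers the reverse Hölder bound.

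The main---and essentially only non-routine---obstacle is the Caccioppoli step: one must exhibit the linear primitive $\eta_c$ in closed form, justify the distributional identity $d(f^*\eta_c) = f^*\omega$ at the borderline Sobolev exponent $W^{1,n}$, and track how the absorption constant depends on $\omega$ through $\norm{\omega}$ and the $\ell_1$-size of its coefficients (which accounts for the stated dependence $p=p(n,K,\omega)$). Everything downstream is the standard harmonic-analytic machinery---Sobolev--Poincaré plus Gehring---which explains the authors' remark that the proof ``adapts almost synthetically'' from the quasiregular case.
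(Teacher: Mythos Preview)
Your proposal is correct and follows essentially the same route as the paper: a Caccioppoli inequality obtained by pulling back a linear primitive of $\omega$ centered at $c=f_B$ and integrating by parts, then Sobolev--Poincar\'e with the exponent pair $(n/2,n)$, then Gehring's lemma. The only cosmetic differences are that the paper writes the primitive abstractly (an $(n-1)$-form $\tau$ with $d\tau=\omega$ and $\tau_{f_B}=0$, hence $\norm{\tau}_y\le \norm{\omega}|y-f_B|$) rather than via your explicit coordinate formula, and absorbs the cross term with H\"older's inequality rather than Young's; in particular the paper's choice of primitive makes the Caccioppoli constant depend only on $n$ and $K$ (the $\norm{\omega}$ cancels against the quasiregularity inequality), so the $\omega$-dependence you flag is in fact not needed.
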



As an application, we obtain the almost everywhere differentiability of quasiregular curves. 
The proof of the following corollary from Theorem \ref{thm:higher} is analogous to the proof of Corollary \ref{cor:Holder} from Theorem \ref{thm:main-vague} and we omit the details.

\begin{corollary}
A quasiregular curve between Riemannian manifolds is almost everywhere differentiable.
\end{corollary}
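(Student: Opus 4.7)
The strategy is to mimic the proof of Corollary \ref{cor:Holder}, replacing the invocation of Theorem \ref{thm:main-vague} by Theorem \ref{thm:higher}, and then to invoke a classical fact about Sobolev mappings with exponent above the dimension.

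First, I would reduce to a local statement in Euclidean space. Fix $x \in M$, choose $K' > K$, and let $\varepsilon > 0$ satisfy $(1+\varepsilon)^{4n} < K'/K$. Pick smooth $(1+\varepsilon)$-bi-Lipschitz charts $\varphi \colon U \to \R^n$ near $x$ and $\psi \colon V \to \R^m$ near $f(x)$ with $fU \subset V$. Exactly as in the proof of Corollary \ref{cor:Holder}, the composition $h = \psi \circ f \circ \varphi^{-1}$ is a $K''$-quasiregular $\widetilde{\omega}$-curve for some $K'' \in (K,K')$ with $\widetilde{\omega} = (\psi^{-1})^* \omega$. By \cite[Lemma 5.2]{Pankka2019}, every $y \in \varphi U$ has a neighborhood on which $h$ is a $K'$-quasiregular curve with respect to the constant coefficient $n$-volume form $\widetilde{\omega}_y \in \bigwedge^n \R^m$.

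Next, I would apply Theorem \ref{thm:higher} on each such neighborhood to conclude that $h \in W^{1,p}_{\loc}(\varphi U, \R^m)$ for some exponent $p = p(n,K',\widetilde{\omega}_y) > n$; covering $\varphi U$ by such neighborhoods and, if needed, shrinking $p$ to a common value on a precompact subdomain yields local higher integrability of $Dh$. At this stage the argument reduces to the classical fact that a Sobolev mapping in $W^{1,p}_{\loc}$ with $p > n$ is differentiable almost everywhere; this is the Calder\'on--Stein differentiability theorem, and it may be obtained by combining the Morrey--Sobolev inequality with Lebesgue differentiation applied to $|Dh|^p$. Consequently $h$ is differentiable almost everywhere on $\varphi U$.

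Finally, since $\varphi$ and $\psi$ are smooth diffeomorphisms, differentiability of $h = \psi \circ f \circ \varphi^{-1}$ at a point of $\varphi U$ is equivalent (by the chain rule) to differentiability of $f$ at the corresponding point of $U$. Covering $M$ by countably many such chart domains $U$ and summing up the null sets of non-differentiability, we obtain that $f$ is differentiable almost everywhere on $M$. The argument contains no genuine obstacle beyond Theorem \ref{thm:higher}: the chart reduction is identical to that of Corollary \ref{cor:Holder}, and the passage from $W^{1,p}_{\loc}$ with $p>n$ to a.e.\ differentiability is standard Sobolev theory.
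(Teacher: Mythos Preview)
Your proposal is correct and follows exactly the route the paper intends: the chart reduction of Corollary~\ref{cor:Holder} combined with Theorem~\ref{thm:higher} yields local $W^{1,p}$-regularity with $p>n$, after which the classical Calder\'on--Stein differentiability theorem finishes the argument. The paper itself omits the details, merely noting that the proof is analogous to that of Corollary~\ref{cor:Holder}, so your write-up is a faithful expansion of what the authors had in mind.
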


\section{Notation}
\label{sec:Notation}

In what follows, we denote $(e_1,\ldots,e_m)$ the standard orthonormal basis of $\R^m$ and $(e^1,\ldots, e^m)$ its dual basis in $(\R^n)^*$. The $n$th exterior power of $(\R^m)^*$, we denote $\bigwedge^n \R^m$. 

For each multi-index $I=(i_1,\ldots, i_n)$, where $1\le i_1 < \cdots < i_n \le n$, we denote $e^I = e^{i_1}\wedge \cdots \wedge e^{i_n}$. For $n=m$, we also denote 
\[
\vol_{\R^n} = e^1 \wedge \cdots \wedge e^n.
\] 
Note that, for $n$-covectors in $\R^n$, the Hodge star $\star \colon \bigwedge^n \R^n \to \R$, defined by 
\[
(\star \xi) \vol_{\R^n} = \xi
\]
for each $\xi\in \bigwedge^n \R^n$, gives the identification $\bigwedge^n \R^n \cong \bigwedge^0 \R^n = \R$.

In what follows, we also use the Hodge star $\star \colon \bigwedge^{n-1}\R^n \to (\R^n)^*$ to identify $\bigwedge^{n-1}\R^n$ and $\R^n$. This identification of spaces yields an identification of the adjoint $L^\sharp \colon \R^n \to \R^n$ of a linear map $L \colon \R^n\to \R^n$ with the induced map $\bigwedge^{n-1}L \colon \bigwedge^{n-1} \R^n \to \bigwedge^{n-1}\R^n$.

\subsection*{Norms on forms}

In what follows we use the following notations for inner products and norms of covectors and linear maps. For the exterior power $\bigwedge^n \R^m$, we set $\langle \cdot, \cdot \rangle$ to be the natural inner product induced by the standard Euclidean inner product in $\R^n$, that is, $\langle e^I, e^J \rangle = \delta_{IJ}$ for multi-indices $I$ and $J$. We denote $|\cdot|$ the Euclidean norm induced by this inner product.

We also set an $\ell_1$-norm $|\cdot|_{\ell_1}$ in $\bigwedge^n \R^n$ as follows. For $\omega = \sum_I u_I e^I\in \bigwedge^n \R^m$, we set
\[
|\omega|_{\ell_1} = \sum_I |u_I|.
\]

Given a linear map $L \colon V\to W$ between inner product spaces, the operator norm $\norm{L}$ of $L$ is 
\[
\norm{L} = \sup\{ |L(v)| \colon v\in V,\ |v|=1\}.
\]

Finally, for each multi-index $I=(i_1,\ldots, i_n)$, let $\pi_I \colon \R^m \to \R^n$ be the corresponding projection $(x_1,\ldots, x_m) \mapsto (x_{i_1},\ldots, x_{i_n})$. Then $\omega = \sum_I u_I e^I$ is the covector 
\[
\omega = \sum_I u_I \pi_I^*(\vol_{\R^n}).
\]

\section{Classical isoperimetric inequality for Sobolev maps}
\label{sec:classical}

In this section we recall and prove the classical isoperimetric inequality for Sobolev mappings; see, for example, Reshetnyak \cite[Lemma II.1.2.]{Reshetnyak-book} for a more detailed account.

\begin{theorem}
\label{thm:isoperimetric}
Let $\Omega \subset \R^n$ be a domain and $B_R=B^n(x_\circ,R) \subset \Omega$ a ball. Let also $f \colon \Omega \to \R^n$ be a Sobolev map in  $W_{\loc}^{1,n} (\Omega, \R^n)$. Then, for almost every $r\in (0,R)$, we have that
\begin{equation}
\label{eq:isoperimetric}
\left|  \int_{B_r} J_f \right| \le (n \sqrt[n-1]{\omega_{n-1}})^{-1} 
\left(\int_{\partial B_r} \norm{D^\sharp f} \right)^\frac{n}{n-1},
\end{equation}
where $\omega_{n-1}$ is the $(n-1)$-dimensional area of the unit sphere $\bS^{n-1}$ in $\R^n$.
\end{theorem}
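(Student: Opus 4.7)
The strategy is to represent the left-hand side as a signed volume via the Brouwer degree, apply the sharp $BV$--Sobolev (equivalently, isoperimetric) inequality in $\R^n$ to that integer-valued function, and then control the resulting boundary term through the area formula on the sphere $\partial B_r$. I would first argue for smooth $f$ and extend to $W^{1,n}_{\loc}$ by approximation, noting that for a.e.\ $r\in(0,R)$ the trace $f|_{\partial B_r}$ belongs to $W^{1,n}(\partial B_r,\R^n)$ and is therefore continuous on the $(n-1)$-sphere by Sobolev embedding.

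For smooth $f$, Brouwer's formula yields
\[
\int_{B_r} J_f\,dx \;=\; \int_{\R^n} \deg(f,B_r,y)\,dy,
\]
where $y\mapsto \deg(f,B_r,y)$ is integer-valued, locally constant on $\R^n\setminus f(\partial B_r)$, vanishes on the unbounded component, and hence has compact support and lies in $BV(\R^n)$. Since $|\deg|$ is a nonnegative integer everywhere, $|\deg|\le|\deg|^{n/(n-1)}$ pointwise, so
\[
\Big|\int_{B_r} J_f\,dx\Big| \;\le\; \int_{\R^n}|\deg|\,dy \;\le\; \|\deg\|_{L^{n/(n-1)}(\R^n)}^{\,n/(n-1)}.
\]

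I would then apply the sharp $BV$--Sobolev inequality $\|u\|_{L^{n/(n-1)}(\R^n)}\le(n\omega_n^{1/n})^{-1}\|Du\|_{\mathrm{TV}(\R^n)}$ to $u=\deg(f,B_r,\cdot)$. The distributional derivative of $\deg$ is supported on $f(\partial B_r)$ and the size of its jump at $y$ is bounded by the local multiplicity $N(y)=\#(f|_{\partial B_r})^{-1}(y)$, so by the area formula on the sphere
\[
\|D\deg\|_{\mathrm{TV}(\R^n)} \;\le\; \int_{\R^n} N(y)\,d\haus^{n-1}(y) \;=\; \int_{\partial B_r} J^{n-1}(f|_{\partial B_r})\,d\haus^{n-1} \;\le\; \int_{\partial B_r}\|D^\sharp f\|\,d\haus^{n-1},
\]
where the last inequality uses the identification $J^{n-1}(f|_{\partial B_r})(x)=|D^\sharp f(x)\cdot\nu(x)|\le\|D^\sharp f(x)\|$ for the outward unit normal $\nu$. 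Combining the estimates and using $\omega_n=\omega_{n-1}/n$ to compute $(n\omega_n^{1/n})^{-n/(n-1)}=(n\omega_{n-1}^{1/(n-1)})^{-1}$ produces the constant in \eqref{eq:isoperimetric}.

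The main obstacle is justifying that the jump of $\deg(f,B_r,\cdot)$ across $f(\partial B_r)$ is bounded by the multiplicity of $f|_{\partial B_r}$, so that $\|D\deg\|_{\mathrm{TV}}\le\int N\,d\haus^{n-1}$. For smooth $f$ this follows from transversality and the local structure of the degree at a regular value, but carrying it over to general Sobolev maps requires approximation together with lower semicontinuity of the total variation on each side; this, rather than the isoperimetric input itself, is the technical heart of the argument.
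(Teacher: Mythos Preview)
Your outline is correct and shares the paper's overall architecture: reduce to smooth maps, invoke the degree formula $\int_{B_r}J_f=\int_{\R^n}\deg(f,B_r,y)\,dy$, exploit that the degree is integer-valued to pass from $L^1$ to $L^{n/(n-1)}$, and apply the sharp $BV$--Sobolev inequality. The constant computation is also right.

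The genuine difference lies in how you bound $\|D\deg\|_{\mathrm{TV}}$ by $\int_{\partial B_r}\|D^\sharp f\|$. You go geometrically: the jump of the degree across $f(\partial B_r)$ at a regular value is bounded by the multiplicity $N(y)$, and then the area formula converts $\int N\,d\mathcal H^{n-1}$ into the surface integral of the tangential Jacobian, which is dominated by $\|D^\sharp f\|$. The paper instead establishes the distributional divergence identity $\Div\big((u\circ f)\,D^\sharp f\big)=((\Div u)\circ f)\,J_f$ for $u\in C^1_0(\R^n,\R^n)$, tests it against a cutoff approximating $\chi_{B_r}$, and combines with the degree formula to get $\bigl|\int_{\R^n}(\Div u)\,\deg\,dy\bigr|\le\|u\|_\infty\int_{\partial B_r}|D^\sharp f|$ directly; this \emph{is} the total-variation bound by duality, with no need to analyze the jump set of the degree. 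What the paper's route buys is precisely the avoidance of the step you flag as ``the main obstacle'': it never touches transversality, Sard-type arguments, or the local structure of the degree near the image hypersurface, and the passage to Sobolev maps is absorbed into a single smooth-approximation step. What your route buys is a clearer geometric picture of where the boundary term comes from, at the cost of that extra justification for the jump estimate.
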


This integral form of the isoperimetric inequality stems from the familiar geometric form of the isoperimetric inequality
\begin{equation}\label{eq:classicaliso}
n^{n-1} \omega_{n-1} \abs{U}^{n-1} \le \abs{\partial U}^n,
\end{equation}
where $\abs{U}$ stands for the volume of a domain $U\subset \R^n$ and $\abs{\partial U}$ is its $(n-1)$-dimensional surface area. The constant $\omega_{n-1}$ is the $(n-1)$-dimensional surface area of the unit sphere $\bS^{n-1}=\partial B^n(0,1)$.

To motivate the integral form of the inequality, we consider first the case of diffeomorphisms. Let $f \colon B_r \to U$ be a diffeomorphism of a ball $B_r=B^n(x_\circ , r) \subset \R^n$ onto $U \subset \R^n$, then 
\[
\abs{U}=\left| \int_{B_r} J_f(x) \dx  \right|
\]
and
\[
\abs{\partial U} \le \int_{\partial B_r} \norm{D^\sharp f(x)} \dx;
\]
here $D^\sharp f(x)$ stands for the cofactor matrix of the differential matrix $Df(x)$; recall that identification $\bigwedge^{n-1} \R^n \cong \R^n$ yields the identification $D^\sharp f(x) = \wedge^{n-1}Df(x)$. 

Having these integral representations for the volume and area, we obtain the integral form of the isoperimetric inequality, namely
\begin{equation}\label{eq:isoperimetric}
n \omega_{n-1}
\left|  \int_{B_r} J_f(x) \dx \right|^{n-1} \le \left(\int_{\partial B_r} \norm{D^\sharp f(x)} \dx \right)^n. 
\end{equation}

The same isoperimetric inequality holds for all mappings in $W^{1,n}_\loc(\Omega, \R^n)$. The proof is based on three tools: integration by parts, local degree, and functions of bounded variation.

\subsection{Integration by parts}

Let $f\colon \Omega \to \R^n$ be a mapping in $W^{1,n}_\loc(\Omega, \R^n)$. Then the Jacobian $J_f$ of $f$ obeys the rule of integration by parts, that is,
\begin{equation}\label{eq:integrationbyparts}
\int_\Omega \varphi J_f = \int_\Omega \varphi df_1\wedge \cdots \wedge df_n 
= - \int_\Omega f_i df_1 \wedge \cdots \wedge df_{n-1} \wedge d\varphi \wedge df_{i+1}\wedge \cdots \wedge df_n
\end{equation}
is valid for every test function $\varphi \in C^\infty_0 (\Omega)$ and each index $i=1, \dots , n$. 

For the surface area term, the integration by parts takes the following form, which we record as a lemma.

\begin{lemma}\label{lem:dividentity}
Let $f\colon \Omega\to \R^n$ be a mapping in the Sobolev class $W^{1,n}_\loc(\Omega, \R^n)$ and $u\in C_0^1 (\R^n, \R^n)$.  Then 
\begin{equation}\label{eq:div}
\Div \big((u\circ f) D^\sharp f \big) = \big((\Div u) \circ  f\big) J_f
\end{equation}
in the sense of distributions. 
\end{lemma}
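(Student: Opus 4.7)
The plan is to interpret \eqref{eq:div} as the naturality identity $d \circ f^* = f^* \circ d$ applied to the $(n-1)$-form $\iota_u \vol_{\R^n}$, where $\iota_u$ denotes interior multiplication with $u$. Under the Hodge identification $\bigwedge^{n-1}\R^n \cong \R^n$, the vector field $(u \circ f)\, D^\sharp f$ on $\Omega$ corresponds to the pullback
\[
f^*(\iota_u \vol_{\R^n}) = \sum_{i=1}^n (-1)^{i-1}(u_i \circ f)\, df_1 \wedge \cdots \wedge \widehat{df_i} \wedge \cdots \wedge df_n,
\]
and the divergence corresponds to the exterior derivative of this $(n-1)$-form. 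Since $f^*(d(\iota_u \vol_{\R^n})) = ((\Div u)\circ f)\, df_1 \wedge \cdots \wedge df_n = ((\Div u)\circ f)\, J_f\, \dx$, identity \eqref{eq:div} is equivalent, in the distributional sense, to $d(f^*(\iota_u \vol_{\R^n})) = f^*(d(\iota_u \vol_{\R^n}))$.

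For smooth $f$ this is a pointwise identity. Leibniz together with the chain rule gives
\[
d(f^*(\iota_u\vol_{\R^n})) = \sum_{i,k}(-1)^{i-1}(\partial_k u_i)(f)\, df_k \wedge df_1 \wedge \cdots \wedge \widehat{df_i} \wedge \cdots \wedge df_n.
\]
Every summand with $k \neq i$ vanishes because $df_k$ then repeats; the $k=i$ summand requires $i-1$ transpositions to restore the natural order of the wedge, contributing a compensating sign $(-1)^{i-1}$. Collecting the surviving terms yields $\sum_i (\partial_i u_i)(f)\, df_1 \wedge \cdots \wedge df_n = ((\Div u)\circ f)\, J_f\, \dx$, which is \eqref{eq:div} pointwise.

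For $f\in W^{1,n}_\loc(\Omega, \R^n)$, I would pair \eqref{eq:div} with a test function $\varphi \in C_0^\infty(\Omega)$ and approximate $f$ by mollifications $f_\varepsilon = \rho_\varepsilon \ast f$ on a neighborhood of $\spt \varphi$. Since $\nabla f_\varepsilon \to \nabla f$ in $L^n_\loc$, Hölder applied to the $n$-fold and $(n-1)$-fold products of partial derivatives yields $J_{f_\varepsilon} \to J_f$ in $L^1_\loc$ and $D^\sharp f_\varepsilon \to D^\sharp f$ in $L^{n/(n-1)}_\loc$. Because $u$ and $\Div u$ are bounded and uniformly continuous on $\R^n$ and $f_\varepsilon \to f$ a.e.\ along a subsequence, the compositions $u \circ f_\varepsilon$ and $(\Div u)\circ f_\varepsilon$ converge in $L^p_\loc$ for every finite $p$. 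Testing the pointwise identity for $f_\varepsilon$ from the previous paragraph against $\varphi$ and passing to the limit then establishes the distributional form of \eqref{eq:div} for $f$.

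The main technical obstacle is coordinating these convergences: the quantities $D^\sharp f$, $J_f$, and $u\circ f$ are all nonlinear in $f$, so one must check that the Hölder pairings $L^{n/(n-1)} \times L^n$ and $L^\infty \times L^1$ used in the passage to the limit are exactly what the hypotheses $f\in W^{1,n}_\loc$ and $u\in C_0^1$ afford. No further regularity is required, and the compact support of $\varphi$ kills all boundary terms in the distributional pairing.
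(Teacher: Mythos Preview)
Your proof is correct. Both you and the paper recognise \eqref{eq:div} as an exterior-calculus identity, but the packaging differs. The paper reduces by linearity to the case $u=(0,\ldots,u_i,\ldots,0)$, introduces the auxiliary map $F=(f_1,\ldots,f_{i-1},u_i\circ f,f_{i+1},\ldots,f_n)$, and observes that $J_F=((\Div u)\circ f)J_f$ pointwise while the integration-by-parts formula \eqref{eq:integrationbyparts} applied to $F$ yields $\int_\Omega \varphi J_F = -\int_\Omega \langle (u\circ f)D^\sharp f,\nabla\varphi\rangle$; the Sobolev passage is thus absorbed into the already-stated identity \eqref{eq:integrationbyparts}. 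Your route is more self-contained: you compute the smooth case directly via $d\circ f^*=f^*\circ d$ on $\iota_u\vol_{\R^n}$ and then carry out the mollification yourself, checking that $D^\sharp f_\varepsilon\to D^\sharp f$ in $L^{n/(n-1)}_\loc$ and $J_{f_\varepsilon}\to J_f$ in $L^1_\loc$. The paper's version is shorter because it leans on \eqref{eq:integrationbyparts}; yours exposes the approximation machinery explicitly and would stand alone without that citation.
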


\begin{proof}
Suppose first that $u \colon \R^n \to \R^n$ is the map
\[
y \mapsto (0, \dots ,0 , u_i(y), 0, \dots , 0 ), 
\]
where $u_i \in C_0^1(\R^n)$ and $i \in \{1, \dots , n\}$, and define 
\[
F=(f_1, \dots, f_{i-1}, u\circ f , f_{i+1}, \dots, f_n) \colon \R^n \to \R^n.
\] 
Let also $\varphi \in C^\infty_0 (\Omega)$. Then~\eqref{eq:integrationbyparts} gives
\[ 
\begin{split} 
\int_\Omega \varphi J_F 
& = - \int_\Omega F_i  df_1 \wedge \cdots \wedge df_{n-1}\wedge d\varphi \wedge df_{i+1} \wedge \cdots \wedge df_n\\
&  = - \int_\Omega \langle  u(f(x))  D^\sharp f(x), \nabla \varphi (x) \rangle \dx. 
\end{split} 
\]
Since
\[
\int_\Omega \varphi (x) J_F(x) \dx = \int_\Omega (\Div u) (f(x)) J_f(x) \varphi (x)\dx 
\]
we have that \eqref{eq:div} follows for $u=(0, \dots ,0 , u_i, 0, \dots , 0 )$.
 The general case follows by the coordinate decomposition of $u$.
\end{proof}
In particularly, if $B_R=B^n(x_\circ , R) \subset \Omega$, then Lemma~\ref{lem:dividentity} gives that
\begin{equation}\label{eq:divestimate}
\left| \int_{B_r} \big( \div u \big)\big(  f(x)\big) J_f(x)  \dx  \right| \le \norm{u}_\infty  \int_{\partial B_r}  \abs{D^\sharp f} \quad \textnormal{for a.e. }  r\in (0,R).
\end{equation}
Indeed, choose a mollifier $\Phi \in C^\infty_0 (B(0,1))$ and let $\Phi_j(x) = j^n \Phi (jx) $ and $\varphi_j$ a convolution approximation of the  characteristic function $\chi_{B(x_\circ , r-1/j)}$; that is, $\varphi_j = \Phi_j \ast  \chi_{B(x_\circ , r-1/j)}$, see \cite[Formula (4.6)]{Iwaniec-Martin-book}. Then $\varphi_j \in C^\infty_0 (\Omega)$ when $j$ is sufficiently large and $\sup \{|\nabla \varphi_j (x)|  \colon x\in \Omega\} \le j$. According to~\eqref{eq:div} we have
\[ 
\begin{split}
  \left|\int_{B_r}   (\Div u) (f(x)) J_f (x) \varphi_j (x) \right | & = \left| -  \int^r_{r-\frac{1}{j}} \int_{\partial B_r}    \langle  u(f(x))  D^\sharp f(x), \nabla \varphi _j(x) \rangle     \right |\\
  & \le \norm{u}_\infty \,  j   \int^r_{r-\frac{1}{j}} \int_{\partial B_r} \abs{D^\sharp f}. 
  \end{split}  \]
Letting $j \to \infty$ and applying the Lebesgue differentiation theorem, we conclude the asserted estimate~\eqref{eq:divestimate}. 

\subsection{Local degree}

Let $B\subset \R^n$ be a ball and let $g\colon \overline{B} \to \R^n $ be a continuous mapping. For every $y_\circ \in \R^n \setminus g(\partial B)$ the {\it Brouwer degree} $\deg(g, B, y_\circ)$ of $g$ with respect to $B$ at $y_\circ$ is a well-defined integer defined as follows. Let $\Omega\subset \R^n \setminus g(\partial B)$ be the $y_\circ$-component of $\R^n \setminus g(\partial B)$ and let $\tilde B = g^{-1}(\Omega)\cap B$. Let also $\iota \colon \tilde B \hookrightarrow B$ be the natural inclusion and let $c_\Omega$ and $c_B$ the generators of the compactly supported Alexander--Spanier cohomology groups $H^n_c(\Omega;\Z)$ and $H^n_c(B;\Z)$, respectively. We may assume that $c_\Omega$ and $c_B$ are fixed so that the orientations of $\Omega$ and $B$ given by $c_\Omega$ and $c_B$ agree with the orientation defined by an orientation class $c_{\R^n}$ of $\R^n$. Then 
\[
\deg(g, B, y_\circ) c_B = \iota^* (g|_{\tilde B})^* c_\Omega.
\]
The Brouwer degree depends only on the boundary values of $g$ in the sense that, if $\tilde g \colon \overline{B}\to \R^n$ is a continuous map satisfying $\tilde g|_{\partial B} = g|_{\partial B}$, then $\deg(y_\circ, g,B) = \deg(y_\circ, g, B)$. Furthermore, if $g\in C^1 (B, \R^n) \cap C^0 (\overline{B}, \R^n)$ and $V$ is a connected component of $\R^n \setminus g(\partial B_r)$ containing $y_\circ$, then we have
\[
\deg(g, B, y_\circ) = \int_{B} \rho (g(x)) J_g(x) \dx = \int_B g^*(\rho \vol_{\R^n}), 
\]
where $\rho\in C_0(V)$ is a nonnegative continuous function satisfying $\int_V \rho (y) \dy =1$. This last statement follows from the identification of the compactly supported Alexander--Spanier cohomology $H^n_c(\cdot; \R) = H^*_c(\cdot;\Z)\otimes \R$ with the compactly supported de Rham cohomology $H^*_{\text{dR},c}(\cdot)$.

\subsection{Proof of Theorem \ref{thm:isoperimetric}}

By approximating $f$, it is enough to prove~\eqref{eq:isoperimetric} for smooth mappings $f \colon \Omega \to \R^n$. We recall that the classical change of variables formula for a continuous function $v \colon \R^n \to \R$ states that 
\begin{equation}\label{eq:cofv} 
\int_{B} (v\circ f) J_f = \int_{\R^n} v(y) \deg (f,B, y) \dy. 
\end{equation}
Applying the identity~\eqref{eq:cofv} with $v=\Div u$ and combining this with~\eqref{eq:divestimate} we obtain
\[ 
\left|\int_{\R^n} \Div u (y) \deg (f,B_r, y) \dy  \right | \le \norm{u}_\infty \int_{\partial B_r}  \abs{D^\sharp f}  
\]
for an arbitrary $u \in C_0^1 (\R^n, \R^n)$. Hence the function $y \mapsto \deg (f,B_r, y) $ has bounded variation and we have the inequality
\begin{equation}\label{eq:bvdivest}
\left( \int_{\R^n} \abs{\deg (f,B_r, y)}^\frac{n}{n-1} \dy \right)^\frac{n-1}{n} 
\le (n \sqrt[n-1]{\omega_{n-1}})^{\frac{n}{1-n}}  \int_{\partial B_r}  \abs{D^\sharp f(x)} \dx \, . 
\end{equation}
 It is worth nothing that the use of the Sobolev inequality~\eqref{eq:bvdivest} comes as no surprise. Indeed, the Sobolev inequality
\[
n^\frac{n-1}{n} \omega^\frac{1}{n}_{n-1} \norm{g}_{\frac{n}{n-1}} \le \abs{Dg} (\R^n) 
\]
for functions of bounded variation $g\colon \R^n \to \R$ is equivalent with the classical isoperimetric inequality~\eqref{eq:classicaliso}.
Here $ \abs{Dg} (\R^n) $ stands for the total variation of the distributional derivative $Dg$ see e.g.~Evans and Gariepy \cite[Section 5.6]{Evans-Gariepy-book}

Since the function $y \mapsto \deg (f,B_r, y)$ is integer valued, we further have that
\[ 
\abs{\deg (f,B_r, y)} \le  \abs{\deg (f,B_r, y)}^\frac{n}{n-1}. 
\]
for each $y\in \R^n \setminus f(\partial B_r)$. Thus
\[ 
\left| \int_{\R^n}  \deg (f,B_r, y) \dy \right|
\le (n \sqrt[n-1]{\omega_{n-1}})^{-1} \left( \int_{\partial B_r} \abs{D^\sharp f}\right)^\frac{n}{n-1}.  
\]
Applying~\eqref{eq:cofv} again, this time with $v\equiv 1$, we obtain the desired inequality
\[ 
\left| \int_{B_r}  J_f \right | \le  (n \sqrt[n-1]{\omega_{n-1}})^{-1} \left( \int_{\partial B_r}  \abs{D^\sharp f} \right)^\frac{n}{n-1}.
\]
This concludes the proof.

\section{An $\omega$-isoperimetric inequality for Sobolev maps}
\label{sec:omega}

The proof of the H\"older continuity of the quasiregular curves is based on a variant  of the classical isoperimetric inequality for Sobolev maps adapted to $n$-volume forms. 

\begin{proposition}
\label{prop:omega-isoperimetry}
Let $\Omega \subset \R^n$ be a domain, $B_R=B(x_\circ , R)\subset \Omega$ a ball, and $\omega \in \bigwedge^n \R^m$ an $n$-covector. Then a Sobolev $W^{1,n}_\loc(\Omega,\R^m)$ map $f\colon \Omega \to \R^m$ satisfies 
\begin{equation}
\label{eq:omega-isoperimetry}
\int_{B_r} f^*\omega \le c_n |\omega|_{\ell_1} \left(\int_{\partial B_r} |D^\sharp f| \right)^\frac{n}{n-1} \quad \textnormal{for a.e } r\in (0,R).
\end{equation}
Here $c_n=(n \sqrt[n-1]{\omega_{n-1}})^{-1}$ is the isoperimetric constant. 
\end{proposition}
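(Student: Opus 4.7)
The plan is to decompose $\omega$ in the multi-index basis and reduce Proposition~\ref{prop:omega-isoperimetry} to the classical isoperimetric inequality of Theorem~\ref{thm:isoperimetric} applied coordinate-wise. Writing $\omega = \sum_I u_I e^I = \sum_I u_I \pi_I^*(\vol_{\R^n})$ gives $|\omega|_{\ell_1} = \sum_I |u_I|$. For each increasing multi-index $I$ of length $n$, I set $g_I := \pi_I \circ f \colon \Omega \to \R^n$, which inherits $W^{1,n}_\loc(\Omega,\R^n)$-regularity since $\pi_I$ is $1$-Lipschitz. The identity $f^* \pi_I^* \vol_{\R^n} = g_I^* \vol_{\R^n} = J_{g_I} \, \vol_{\R^n}$ immediately yields
\[
\int_{B_r} f^*\omega \;=\; \sum_I u_I \int_{B_r} J_{g_I}.
\]

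Next, I apply Theorem~\ref{thm:isoperimetric} to each $g_I$ to obtain, for a.e.\ $r \in (0,R)$,
\[
\Bigl|\int_{B_r} J_{g_I}\Bigr| \;\le\; c_n \Bigl( \int_{\partial B_r} |D^\sharp g_I| \Bigr)^{n/(n-1)}.
\]
The crucial step is the pointwise bound $|D^\sharp g_I| \le |D^\sharp f|$ a.e.\ in $\Omega$. Since $Dg_I = \pi_I \circ Df$ and exterior power is functorial, $\bigwedge^{n-1} Dg_I = (\bigwedge^{n-1}\pi_I) \circ (\bigwedge^{n-1} Df)$. In the standard multi-index basis, $\bigwedge^{n-1}\pi_I \colon \bigwedge^{n-1}\R^m \to \bigwedge^{n-1}\R^n$ is the coordinate projection onto those $(n-1)$-multi-indices contained in $I$, and in particular a Euclidean contraction. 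Under the identification $\bigwedge^{n-1}\R^n \cong \R^n$ and $D^\sharp(\cdot) = \bigwedge^{n-1}D(\cdot)$ recalled in the Notation section, this yields the desired inequality.

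Summing over $I$, using the triangle inequality $\int_{B_r} f^*\omega \le \sum_I |u_I|\bigl|\int_{B_r} J_{g_I}\bigr|$ together with $\sum_I |u_I| = |\omega|_{\ell_1}$, I conclude
\[
\int_{B_r} f^*\omega \;\le\; c_n\, |\omega|_{\ell_1}\, \Bigl(\int_{\partial B_r} |D^\sharp f|\Bigr)^{n/(n-1)},
\]
which is the claimed estimate~\eqref{eq:omega-isoperimetry}. The main point to verify carefully is the norm comparison $|D^\sharp g_I| \le |D^\sharp f|$: conceptually straightforward once the identifications are kept in mind, but the only nontrivial piece of the argument. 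No further regularity machinery is needed beyond what is already present in the classical case.
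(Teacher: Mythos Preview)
Your proof is correct and follows essentially the same approach as the paper: decompose $\omega=\sum_I u_I\,\pi_I^*\vol_{\R^n}$, apply Theorem~\ref{thm:isoperimetric} to each $\pi_I\circ f$, use that $\bigwedge^{n-1}\pi_I$ is a contraction, and sum. The only cosmetic difference is that the paper absorbs each coefficient $u_I$ into a rescaling $\lambda_I$ of $\R^n$ before applying the isoperimetric inequality (so the factor $|u_I|$ emerges from $\norm{D^\sharp(\lambda_I\circ\pi_I)}^{n/(n-1)}=|u_I|$), whereas you pull $|u_I|$ out directly via the triangle inequality; the two computations are equivalent.
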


\begin{proof}
Let $\omega\in \bigwedge^n \R^m$ be the covector
\[
\omega = \sum_I u_I \pr_I^*\vol_{\R^n}.
\]

For each multi-index $I$, let $\lambda_I \colon \R^n \to \R^n$ be the linear map 
\[
(y_1,\ldots, y_n) \mapsto (\varepsilon |u_I|^{1/n} y_1, \ldots, |u_I|^{1/n} y_n), 
\]
where the sign $\varepsilon\in \{\pm 1\}$ is chosen so that $\lambda_I^*\vol_{\R^n} = u_I \vol_{\R^n}$. 

Let also $\pi_I = \lambda_I \circ \pr_I \colon \R^m \to \R^n$ and $f_I = \pi_I \circ f \colon \Omega \to \R^n$. Then
\[
f^*\omega = \sum_I f^*(\lambda_I \circ \pr_I)^* \vol_{\R^n} = \sum_I f^*\pi_I^*\vol_{\R^n} = \sum_I f_I^*\vol_{\R^n}-
\]
Moreover, 
\[
\norm{D^\sharp \pi_I} = \norm{\wedge^{n-1} \pi_I} = |u_I|^{\frac{n-1}{n}}.
\]

By the isoperimetric inequality for Sobolev mappings, we have
\begin{eqnarray*}
\int_{B_r} f^*\omega &=& \int_{B_r} \sum_I f^*(\pi_I^*\vol_{\R^n}) 
= \sum_I \int_{B_r} f_I^*\vol_{\R^n} \\
&=& \sum_I \int_{B_r} J_{f_I}
\le c_n \sum_I \left( \int_{\partial B_r} |D^\sharp f_I| \dx\right)^\frac{n}{n-1}
\end{eqnarray*}
for almost every $r\in (0,R)$, where $c_n>0$ is the isoperimetric constant depending only on $n$.

Since
\[
D^\sharp f_I = \wedge^{n-1} D(\pi_I \circ f) = ((\wedge^{n-1} D\pi_I)\circ f)\cdot(\wedge^{n-1}Df),
\]
we have that 
\begin{eqnarray*}
\left( \int_{\partial B_r} \norm{D^\sharp f_I} \dx\right)^\frac{n}{n-1}
&=& \left( \int_{\partial B_r} (\norm{\wedge^{n-1}D\pi_I}\circ f)\cdot \norm{\wedge^{n-1}Df} \right)^\frac{n}{n-1} \\
&=& \left( \int_{\partial B_r} |u_I|^\frac{n-1}{n} \norm{\wedge^{n-1}Df} \right)^\frac{n}{n-1} \\
&=& |u_I| \left(\int_{\partial B_r} \norm{D^\sharp f} \right)^\frac{n}{n-1}
\end{eqnarray*}
for almost every $r\in (0,R)$. Thus \eqref{eq:omega-isoperimetry} holds.
\end{proof}

\section{Proof of the H\"older continuity}
\label{sec:main-thm-proof}
Let $f\colon \Omega \to \R^m$ be a $K$-quasiregular $\omega$-curve with respect to a covector $\omega \in \bigwedge^n \R^m$ and let $B_R=B(x_\circ , R) \subset \Omega$ be a ball. Morrey's ideas~\cite{Morrey} form the basis for our proof here. A crucial tool in establishing the sharp H\"older exponent is the isoperimetric inequality~\eqref{eq:isoperimetric} which together with distortion inequality, Hadamard's inequality $\norm{D^\sharp f} \le \norm{Df}^{n-1}$, and H\"older's inequality gives
\begin{align*}
\int_{B_r} \norm{\omega} \norm{Df}^n &\le K \int_{B_r} f^*\omega \le  (n \sqrt[n-1]{\omega_{n-1}})^{-1}|\omega|_{\ell_1} K \left(\int_{\partial B_r} \norm{D^\sharp f} \right)^\frac{n}{n-1} \\
&\le   (n \sqrt[n-1]{\omega_{n-1}})^{-1} |\omega|_{\ell_1} K \left(\int_{\partial B_r} \norm{D f}^{n-1} \right)^\frac{n}{n-1}\\
& \le \frac{r}{n}|\omega|_{\ell_1} K \int_{\partial B_r} \norm{D f}^{n} 
\le \frac{r}{n}\frac{|\omega|_{\ell_1}}{\norm{\omega}} K \int_{\partial B_r} \norm{\omega}\norm{Df}^n
\end{align*}
for almost every $r\in (0,R)$. 
Thus
\[
\Phi (r):=\int_{B_r} \norm{Df}^n  \le  \frac{r}{n}\frac{|\omega|_{\ell_1}}{\norm{\omega}}K \int_{\partial B_r} \norm{Df}^n  = \frac{r}{n} \frac{|\omega|_{\ell_1}}{\norm{\omega}} K \,  \Phi' (r)  
\]
and therefore
\[ 
\frac{n}{K} \frac{\norm{\omega}}{|\omega|_{\ell_1}} \frac{d  }{dr} \log r \le \frac{d}{dr} \log \Phi (r).
\]
After integrating this estimate from $r$ to $R$ with respect the variable $r$ we obtain
\[  
\int_{B_r} \norm{Df}^n = \Phi (r) \le \left( \frac{r}{R}\right)^{\frac{n}{K} \frac{\norm{\omega}}{|\omega|_{\ell_1}}}\Phi (R) = \left( \frac{r}{R}\right)^{\frac{n}{K}\frac{\norm{\omega}}{|\omega|_{\ell_1}}} \int_{B_R} \norm{Df}^n.
\]

We record the outcome as a lemma.
\begin{lemma}
Let $f\colon \Omega \to \R^m$ be a $K$-quasiregular mapping and $B(a , 3R) \subset \Omega$ a ball. Then for each ball $B_r=B(x_\circ , r) \subset B(a,2R)$ 
we have 
\begin{equation}\label{eq:morreydecay}
\left(\frac{1}{\abs{B_r}} \int_{B_r} \norm{Df}^n \right)^\frac{1}{n} \le C r^{\frac{1}{K}\frac{\norm{\omega}}{|\omega|_{\ell_1}}-1}, 
\end{equation} 
where the constant $C$ depends on $n, K, R$, and $\int_{B(a,3R)} \norm{Df}^n$.
\end{lemma}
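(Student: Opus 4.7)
My plan is to read the lemma essentially for free off the decay estimate produced in the paragraph immediately preceding the statement. That derivation, starting from Proposition~\ref{prop:omega-isoperimetry} combined with the distortion hypothesis, Hadamard's inequality $\norm{D^\sharp f}\le \norm{Df}^{n-1}$, and H\"older's inequality, shows that whenever $B(y,\rho) \subset \Omega$ the function $\Phi(s) = \int_{B(y,s)} \norm{Df}^n$ satisfies
\[
\Phi(r) \le \left(\frac{r}{\rho}\right)^{n\alpha} \Phi(\rho), \qquad r \in (0,\rho],
\]
with $\alpha = \frac{1}{K}\frac{\norm{\omega}}{|\omega|_{\ell_1}}$. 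So the only thing I need to do is turn this into an estimate on averages with an admissible constant.

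To extract the lemma, I would pick $x_\circ \in B(a,2R)$ and $r$ so that $B_r = B(x_\circ, r) \subset B(a,2R)$. Since $x_\circ \in B(a,2R)$, the ball $B(x_\circ, R)$ is contained in $B(a,3R) \subset \Omega$, so the monotonicity estimate is available with $\rho = R$ and center $x_\circ$, and it gives
\[
\int_{B_r} \norm{Df}^n \le \left(\frac{r}{R}\right)^{n\alpha} \int_{B(x_\circ, R)} \norm{Df}^n \le \left(\frac{r}{R}\right)^{n\alpha} \int_{B(a,3R)} \norm{Df}^n.
\]
Dividing both sides by $|B_r| = \omega_{n-1} r^n / n$ and taking $n$-th roots then yields the claimed estimate with
\[
C = \left(\frac{n}{\omega_{n-1}}\right)^{1/n} R^{-\alpha} \left( \int_{B(a,3R)} \norm{Df}^n \right)^{1/n},
\]
which depends only on $n$, $K$, $\omega$, $R$, and $\int_{B(a,3R)} \norm{Df}^n$, as required.

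The hard part is entirely absent: all genuine work is the derivation of the monotonicity estimate already supplied in the text. The only small point to verify is that this derivation is valid for an arbitrary center $x_\circ \in B(a,2R)$ rather than for the single distinguished center used in the exposition; but this is transparent, since every step in that derivation involves only concentric balls contained in $\Omega$ together with the almost-everywhere applicability of the $\omega$-isoperimetric inequality on their boundary spheres.
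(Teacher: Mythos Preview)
Your proposal is correct and matches the paper's approach exactly: the paper introduces the lemma with the phrase ``We record the outcome as a lemma,'' meaning it is simply a restatement of the decay estimate derived in the paragraph immediately preceding it. Your observation that the derivation applies to any center $x_\circ\in B(a,2R)$ (because $B(x_\circ,R)\subset B(a,3R)\subset\Omega$) and the subsequent division by $|B_r|$ are precisely the trivial translation steps the paper leaves implicit.
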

Now it is well known that the hunted local H\"older continuity follows for a Sobolev mapping whose differential lies in the Morrey space~\eqref{eq:morreydecay}.  Our proof is based on the iconic Sobolev met Poincar\'e chain argument~\cite{Hajlasz-Koskela-SobolevmetPoincare}.

\begin{lemma}
Let ${\mathbb  B} \subset \R^n$ be a ball and $g \colon 2\mathbb B \to \R$ a Sobolev function in $W^{1,p} (2 \mathbb B)$ for $1\le p < \infty$. If for every ball $B_r = B(x_\circ , r ) \subset 2\mathbb  B$ we have
 \begin{equation}\label{eq:morreyspaceass}
 \left(\frac{1}{\abs{B_r}} \int_{B_r} \norm{Df(x)}^p \dx\right)^\frac{1}{p} \le C r^{\alpha -1} \qquad 0<\alpha \le 1, 
  \end{equation}
then $g$ is H\"older continuous in $\mathbb B$ with  exponent $\alpha$.
\end{lemma}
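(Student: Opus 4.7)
The plan is to derive the standard Morrey--Campanato estimate: the mean oscillation of $g$ on each ball $B_r \subset 2\mathbb{B}$ decays like $r^\alpha$, and then to pass to a pointwise bound at Lebesgue points through a dyadic telescoping sum.

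Concretely, I first apply the $(1,p)$-Poincar\'e inequality on a ball $B_r \subset 2\mathbb B$,
\[
\frac{1}{\abs{B_r}} \int_{B_r} \abs{g - g_{B_r}} \dx \le c_n r \left( \frac{1}{\abs{B_r}} \int_{B_r} \norm{Dg}^p \dx \right)^{1/p},
\]
and combine it with hypothesis \eqref{eq:morreyspaceass} to conclude
\[
\frac{1}{\abs{B_r}} \int_{B_r} \abs{g - g_{B_r}} \dx \le C r^\alpha.
\]
For concentric balls $B_r \subset B_R \subset 2\mathbb B$ with $R \le 2r$, it follows that $\abs{g_{B_r} - g_{B_R}} \le 2^n C R^\alpha$, by averaging the previous bound over $B_r$. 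Applied along the dyadic chain $B^{(k)} := B(x, 2^{-k} r)$ and summed as a geometric series, this yields for every $k\ge 0$
\[
\abs{g_{B^{(k)}} - g_{B^{(0)}}} \le \frac{2^n C}{1 - 2^{-\alpha}}\, r^\alpha.
\]
At any Lebesgue point $x$ of $g$, the left side tends to $\abs{g(x) - g_{B(x,r)}}$ as $k \to \infty$, producing the pointwise bound $\abs{g(x) - g_{B(x,r)}} \le C' r^\alpha$.

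To finish, I compare two Lebesgue points $x, y \in \mathbb B$ with $d = \abs{x-y}$. When $d$ is small enough that $B := B(x, 2d) \subset 2\mathbb B$, one has $B(y,d) \subset B$, so the same nested-ball comparison gives $\abs{g_B - g_{B(y,d)}} \le C d^\alpha$, and the triangle inequality yields
\[
\abs{g(x) - g(y)} \le \abs{g(x) - g_B} + \abs{g_B - g_{B(y,d)}} + \abs{g_{B(y,d)} - g(y)} \le C'' d^\alpha.
\]
For $x, y$ at larger separation within $\mathbb B$, the same estimate follows by chaining through a polygonal path with a larger constant. Since Lebesgue points are dense in $\mathbb B$, $g$ coincides on $\mathbb B$ with an $\alpha$-H\"older continuous function.

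The argument is classical and I do not foresee a genuine obstacle; the only care needed is to keep every ball in the chain inside $2\mathbb B$, which is precisely the reason the hypothesis is imposed on $2\mathbb B$ while the conclusion is stated only on $\mathbb B$.
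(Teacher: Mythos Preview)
Your argument is correct and is essentially the same as the paper's: both combine the $(1,p)$-Poincar\'e inequality with the Morrey decay hypothesis and telescope over a dyadic chain of shrinking balls to bound $\abs{g(x)-g_{B(x,r)}}$, then compare two Lebesgue points via a common enclosing ball. The only cosmetic differences are that the paper sums the Poincar\'e estimates directly rather than first isolating the mean-oscillation bound, and you are slightly more careful in noting the polygonal chaining needed when $B(x,2d)\not\subset 2\mathbb B$.
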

\begin{proof}
 Let $x,y \in \mathbb B$ be Lebesgue points of $g$. Write $\mathcal B_i (x)=  B(x, 2^{-i }\abs{x-y})$ for $i\in \{0, 1,  2, \dots\}$ and $g_{\mathcal B_i(x)} = \frac{1}{\mathcal B_i(x)} \int_{\mathcal B_i(x)} g$. Then $g_{\mathcal B_i (x)} \to g(x) $ as $i$ goes to infinity.  The Poincar\'e inequality gives
\[
\begin{split}
\abs{g(x)-g_{\mathcal B_0}(x)} & \le \sum_{i=0}^\infty  \abs{g_{\mathcal B_i}(x) - g_{\mathcal B_{i+1}}(x)   } \\
& \le  \sum_{i=0}^\infty  \frac{1}{\abs{\mathcal B_{i+1} (x)}} \int_{\mathcal B_{i+1} (x)} \abs{g(x)- g_{\mathcal B_i}(x)} \dx \\
& \le C\,  \sum_{i=0}^\infty   \frac{1}{\abs{\mathcal B_{i} (x)}} \int_{\mathcal B_{i} (x)} \abs{g(x)- g_{\mathcal B_i}(x)} \dx \\
& \le C \,  \sum_{i=0}^\infty  2^{-i }\abs{x-y}\left(   \frac{1}{\abs{\mathcal B_{i} (x)}} \int_{\mathcal B_{i} (x)}  \abs{\nabla g (x)}^p \dx \right)^\frac{1}{p}. 
\end{split}
\]
Similarly,
\[ 
\abs{g(y)-g_{\mathcal B_0}(y)}   \le C \,  \sum_{i=0}^\infty  2^{-i }\abs{x-y}\left(   \frac{1}{\abs{\mathcal B_{i} (y)}} \int_{\mathcal B_{i} (y)}  \abs{\nabla g (x)}^p \dx \right)^\frac{1}{p} 
\]
and
\[
\abs{g_{\mathcal B_0}(x) - g_{\mathcal B_0}(y)}  \le C \abs{x-y} \left(   \frac{1}{\abs{2 \mathcal B_{0} (x)}} \int_{2\mathcal B_{0} (x)}  \abs{\nabla g (x)}^p \dx \right)^\frac{1}{p}.   
\]
Combining these with the assumption~\eqref{eq:morreyspaceass} we have
\[
\abs{g(x)-g(y)} \le C \abs{x-y}^\alpha   \left(   \int_{2\mathcal B_{0} (x)}  \abs{\nabla g (x)}^p \dx \right)^\frac{1}{p}   \sum_{i=0}^\infty  (2^{-i})^\alpha. 
\]
The claim follows because the geometric series is convergent. 
\end{proof}

\section{Higher integrability of quasiregular curves}

As in the quasiregular case (see e.g.~\cite{Bojarski-Iwaniec}), the proof of the higher integrability begins with a Caccioppoli inequality. Since we use here another version of the inequality than in \cite{Pankka2019}, we recall here the standard argument.

\begin{lemma}[Caccioppoli's inequality]
\label{lemma:Caccioppoli}
Let $\Omega$ be a domain, $f\colon \Omega \to \R^m$ be a $K$-quasiregular $\omega$-curve, where $\omega \in \Omega^n(\R^m)$ is an $n$-volume form with constant coeffcients. Then, for each cube $B\Subset \Omega$ and for each non-negative function $\varphi \in C^\infty_0(B)$, 
\[
\int_B \varphi^n f^*\omega \le n^n \norm{\omega} K^{n-1}  \int_B |\grad \varphi|^n \left|f(x)-f_B \right|^n \dx,  
\]
where 
\[
f_B = \vint_B f(x) \dx.
\]
\end{lemma}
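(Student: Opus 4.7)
The plan is to parallel the classical Caccioppoli estimate for quasiregular mappings, exploiting the fact that a constant-coefficient $n$-volume form admits an explicit primitive centered at the mean $f_B$. The argument splits into four ingredients: (i) a reduction to $f_B=0$, (ii) an integration-by-parts identity via a primitive of $\omega$, (iii) a Hadamard-type pointwise bound on the resulting integrand, and (iv) H\"older's inequality followed by an absorption step driven by the distortion inequality.

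For (i), since $\omega \in \bigwedge^n \R^m$ has constant coefficients, translation of $f$ leaves the pull-back invariant: $(f-f_B)^*\omega = f^*\omega$. I may therefore assume $f_B=0$. For (ii), let $E$ denote the Euler vector field $E(y)=y$ on $\R^m$ and set $\lambda := \tfrac{1}{n}\iota_E\omega$. Because $\omega$ is closed and homogeneous of degree $n$ under the dilations of $\R^m$, Cartan's formula gives $d\lambda = \tfrac{1}{n}\mathcal{L}_E\omega = \omega$, while the definition of comass together with Hadamard's inequality yields the pointwise bound $\norm{\lambda(y)} \le \tfrac{1}{n}\norm{\omega}\,|y|$. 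A mollification argument identical to the proof of Lemma~\ref{lem:dividentity} then promotes this to the Sobolev integration-by-parts identity
\[
\int_B \varphi^n f^*\omega \;=\; \int_B \varphi^n d(f^*\lambda) \;=\; -n\int_B \varphi^{n-1}\,d\varphi \wedge f^*\lambda,
\]
the boundary term vanishing because $\varphi$ has compact support in $B$.

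For (iii), every $(n-1)$-covector on $\R^n$ is simple, so its comass and Euclidean norms coincide; in particular $|d\varphi\wedge f^*\lambda|_x \le |\grad\varphi(x)|\cdot\norm{(f^*\lambda)_x}$. The chain rule together with Hadamard gives $\norm{(f^*\lambda)_x} \le \norm{\lambda(f(x))}\,\norm{Df(x)}^{n-1}$, and combining with the comass bound on $\lambda$ produces
\[
\int_B \varphi^n f^*\omega \;\le\; \norm{\omega}\int_B \varphi^{n-1}\,|\grad\varphi|\,|f(x)-f_B|\,\norm{Df(x)}^{n-1}\,\dx.
\]
For (iv), H\"older's inequality with conjugate exponents $n$ and $n/(n-1)$ decouples the integrand, and the distortion inequality $\norm{\omega}\norm{Df}^n \le K\,\star f^*\omega$ lets me replace $\int_B \varphi^n\norm{Df}^n$ by $(K/\norm{\omega})\int_B\varphi^n f^*\omega$. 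Writing $A := \int_B\varphi^n f^*\omega$, these two steps combine into an inequality of the form
\[
A \;\le\; C(n)\,\norm{\omega}^{1/n}\,K^{(n-1)/n}\,A^{(n-1)/n}\left(\int_B|\grad\varphi|^n|f-f_B|^n\right)^{1/n},
\]
and cancelling the factor $A^{(n-1)/n}$ (permissible once one checks $A<\infty$, which follows from the higher integrability of $\norm{Df}$ on $B$) yields a Caccioppoli bound of the claimed shape; the combinatorial constant $n^n$ appears naturally when the factor $n$ from $d(\varphi^n) = n\varphi^{n-1}d\varphi$ is raised to the $n$-th power after the H\"older step.

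The main technical obstacle is step (ii): the integrands making up the IBP formula only make sense in $L^1$, because $Df \in L^n_\loc$ forces $f^*\lambda$ to lie merely in $L^{n/(n-1)}_\loc$ and $f^*\omega$ in $L^1_\loc$. Rigorously one must approximate $f$ by smooth maps in $W^{1,n}$, apply the classical Stokes identity to the smooth approximants, and pass to the limit using H\"older to control the $(n-1)$-fold wedge products of the differentials and the continuity of the pull-back under $W^{1,n}$-convergence. This follows the same template as Lemma~\ref{lem:dividentity}; everything else in the proof is book-keeping with norms on exterior powers.
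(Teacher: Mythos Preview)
Your argument is essentially the paper's: build a primitive of $\omega$ vanishing at $f_B$, integrate by parts, bound pointwise via the comass, apply H\"older, and absorb using the distortion inequality. Two remarks are in order.

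First, your explicit primitive $\lambda=\tfrac{1}{n}\iota_E\omega$ satisfies the \emph{sharper} bound $\norm{\lambda(y)}\le\tfrac{1}{n}\norm{\omega}\,|y|$, so in your own computation the factor $n$ coming from $d(\varphi^n)=n\varphi^{n-1}d\varphi$ cancels against this $\tfrac{1}{n}$; following your steps through, one arrives at the constant $\norm{\omega}K^{n-1}$ rather than $n^n\norm{\omega}K^{n-1}$. Your closing sentence about the origin of the $n^n$ is therefore inconsistent with your setup---the paper picks up the extra $n^n$ only because it uses the cruder Lipschitz bound $\norm{\tau}_y\le\norm{\omega}\,|y-f_B|$ on its (unspecified) primitive. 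This is harmless for the stated inequality, of course.

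Second, and more importantly, justifying $A<\infty$ by appealing to ``higher integrability of $\norm{Df}$'' is circular: the higher-integrability theorem in this paper is proved \emph{from} the present Caccioppoli lemma. The finiteness you need follows directly from $f\in W^{1,n}_\loc$, since Hadamard's inequality gives the pointwise bound $\star f^*\omega\le\norm{\omega}\,\norm{Df}^n\in L^1_\loc$; no improved integrability is required.
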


\begin{proof}
Let $y_\circ = f_B$ for simplicity. Since $\omega$ is closed, it is exact and we may fix an $(n-1)$-form $\tau\in \Omega^{n-1}(\R^m)$ for which $\omega = d\tau$ and $\tau_{y_\circ} = 0$. Then $\tau$ is $\norm{\omega}$-Lipschitz. More precisely, we have that $\norm{\tau}_y \le \norm{\omega} |y-y_\circ|$ for each $y\in \R^m$.

Let $\varphi\in C^\infty_0(B)$ be a non-negative function satisfying $\varphi|_{\frac{1}{2}B} \equiv 1$. Since the function $\star f^*\omega$ is non-negative, we have that 
\begin{eqnarray*}
\int_B \varphi^n f^*\omega 
&=& \int_B \varphi^n f^*d\tau 
= \int_B \varphi^n df^*\tau \\
&=& \int_B d(\varphi^n f^*\tau) - \int_B d\varphi^n \wedge f^*\tau 
= - \int_B d\varphi^n \wedge f^*\tau \\
&\le& \int_B |\grad \varphi^n| (\norm{\tau}\circ f)\norm{Df}^{n-1} \\
&\le& n \norm{\omega} \int_B |\grad \varphi(x)| \left| f(x)-y_\circ \right| \varphi^{n-1} \norm{Df(x)}^{n-1} \dx,
\end{eqnarray*}
where $\norm{\tau}$ is the pointwise comass norm of $\tau$. Thus, by H\"older's inequality, 
\[
\int_B \varphi f^*\omega 
\le n\norm{\omega} \left( \int_B |\grad \varphi|^n \left| f(x)-y_\circ \right|^n \dx \right)^{1/n}  \left( \int_B  \varphi^n \norm{Df}^n \right)^{(n-1)/n}.
\]
Since $f$ is a $K$-quasiregular $\omega$-curve, we have that
\[
\norm{\omega} \left( \int_B  \varphi^n \norm{Df}^n \right)^{(n-1)/n} \le \norm{\omega}^{1/n} \left( \int_B \varphi^n f^*\omega \right)^{(n-1)/n}.
\]
Thus  
\begin{align*}
\left( \int_B \varphi f^*\omega \right)^{1/n}
&\le n \norm{\omega}^{1/n} K^{(n-1)/n} \left( \int_B |\grad \varphi|^n \left| f(x)-y_\circ \right|^n \dx \right)^{1/n}. 
\end{align*}
\end{proof}

The Poincar\'e inequality for Sobolev functions in $W^{1,n}_\loc$ now yields the following corollary.

\begin{lemma}
\label{lemma:Poincare}
Let $\Omega$ be a domain, $f\colon \Omega \to \R^m$ be a $K$-quasiregular $\omega$-curve, where $\omega \in \Omega^n(\R^m)$ is an $n$-volume form with constant coeffcients. Let $B=B^n(x_\circ,r) \subset \Omega$ be a ball. Then there exists a constant $C=C(n,K)>0$ for which
\[
\left(\int_{\frac{1}{2}B} \norm{Df}^n \right)^{1/n} \le \frac{C}{r^{1/n}} \left( \int_B \norm{Df}^{n/2} \right)^{2/n}.
\]
\end{lemma}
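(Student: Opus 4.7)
The strategy is to combine Caccioppoli's inequality (Lemma \ref{lemma:Caccioppoli}) with the borderline Sobolev--Poincar\'e embedding $W^{1,n/2}(B)\hookrightarrow L^n(B)$; the critical feature is that the Sobolev conjugate of $n/2$ is exactly $n$, so the non-averaged Sobolev--Poincar\'e inequality on $B$ carries no $r$-factor.

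First I would fix a standard cutoff $\varphi\in C^\infty_0(B)$ with $\varphi\equiv 1$ on $\tfrac{1}{2}B$ and $\abs{\grad\varphi}\le c_n/r$. Since $\omega$ has constant coefficients, $\norm{\omega}$ is a constant and the distortion inequality yields $\norm{\omega}\norm{Df}^n\le K\star f^*\omega$ almost everywhere in $\Omega$. Integrating this over $\tfrac{1}{2}B$ and applying Lemma \ref{lemma:Caccioppoli} to $\varphi$ gives
\[
\int_{\frac{1}{2}B}\norm{Df}^n\le \frac{K}{\norm{\omega}}\int_B\varphi^n f^*\omega \le \frac{C(n,K)}{r^n}\int_B\abs{f-f_B}^n,
\]
where the $\norm{\omega}$-factors from the two ingredients cancel. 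Next, since $f\in W^{1,n}_\loc\subset W^{1,n/2}_\loc$, the non-averaged Sobolev--Poincar\'e inequality at the pair $(n/2,n)$ provides
\[
\left(\int_B\abs{f-f_B}^n\right)^{1/n}\le C(n)\left(\int_B\norm{Df}^{n/2}\right)^{2/n}.
\]
Raising this to the $n$th power, substituting it into the previous display, and extracting the $n$th root yields the claimed bound.

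I do not foresee any substantial difficulty: the Caccioppoli estimate is already at hand, and the remaining ingredient is the standard Sobolev--Poincar\'e inequality. The only point worth checking carefully is that the Sobolev--Poincar\'e constant at the borderline pair $(n/2,n)$ produces no extra $r$-factor in the non-averaged form; this is a routine consequence of the scale-invariant averaged inequality $(\vint_B\abs{f-f_B}^n)^{1/n}\le Cr(\vint_B\norm{Df}^{n/2})^{2/n}$ combined with $\abs{B}\sim r^n$, which cancels the $r$ against $\abs{B}^{-1/n}$.
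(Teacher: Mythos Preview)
Your approach is exactly the paper's: standard cut-off, then quasiregularity combined with the Caccioppoli inequality (Lemma~\ref{lemma:Caccioppoli}), then the borderline Sobolev--Poincar\'e inequality for the exponent pair $(n/2,n)$ applied componentwise. One remark: your chain of estimates actually produces $C/r$ rather than $C/r^{1/n}$ on the right-hand side---the paper's own proof carries the same slip (writing $3/r$ where $|\nabla\varphi|^n\le(3/r)^n$ is needed), and the $C/r$ version is the dimensionally correct one that the subsequent reverse-H\"older step in fact uses, since there $r^{1/n}$ is silently replaced by $|Q'|^{1/n}\sim r$.
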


\begin{proof}
Let $\varphi \in C^\infty_0(B)$ be the standard cut-off function satisfying $\varphi|_{\frac{1}{2}B} \equiv 1$ and $|\grad \varphi| \le 3/r$. Then by the quasiregularity and the Caccioppoli inequality,
we have the estimate
\begin{align*}
\norm{\omega} \int_{\frac{1}{2}B} \norm{Df}^n 
&\le \int_{\frac{1}{2}B} K f^*\omega 
\le K n^n \norm{\omega} K^{n-1} \frac{3}{r} \int_B |f(x) - f_B|^n \dx.
\end{align*}
Thus, by the Poincar\'e inequality, we have the estimate
\begin{align*}
\left( \int_{\frac{1}{2}B} \norm{Df}^n \right)^{1/n}
&\le \frac{C(n,K)}{r^{1/n}} \left( \int_B |f(x) - f_B|^n \dx\right)^{1/n} \\
&\le  \frac{C(n,K)}{r^{1/n}} \sum_{i=1}^n \left( \int_B |f_i(x) - (f_i)_B|^n \dx\right)^{1/n} \\
&\le  \frac{C(n,K)}{r^{1/n}} \sum_{i=1}^n \left( \int_B \norm{Df_i}^{n/2} \dx\right)^{2/n} \\
&\le \frac{C(n,K)}{r^{1/n}} \left( \int_B \norm{Df}^{n/2} \right)^{2/n};
\end{align*}
here we used the fact that $f-f_B = \left(f_1 - (f_1)_B, \ldots, f_n-(f_n)_B\right)$. 
\end{proof}

The higher integrability of the quasiregular $\omega$-curves with respect to constant coefficient $n$-volume forms now follow with the standard reverse H\"older argument. Before the statement, we recall that, as in the quasiregular case, the in Lemmas \ref{lemma:Caccioppoli} and \ref{lemma:Poincare}, the claims hold for a cube $Q\subset \Omega$ in place of the ball $B$. 

We record the higher integrability of a quasiregular curve --  with respect to a covector -- as follows.

\begin{proposition}
Let $f\colon \Omega \to \R^m$ be a $K$-quasiregular $\omega$-curve for $\omega \in \bigwedge^n \R^m$. Then there exists $p=p(n,K)>n$ and $C=C(n,K,p) \ge 1$ having the property that, for each cube $Q\subset 2Q \subset \Omega$, holds
\[
\left( \int_Q \norm{Df}^p \right)^{1/p} \le C \left( \int_Q \norm{Df}^n \right)^{1/p}.
\]
\end{proposition}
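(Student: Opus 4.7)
The plan is to combine the reverse Hölder inequality implicit in Lemma \ref{lemma:Poincare} with Gehring's self-improving lemma. Since $\omega \in \bigwedge^n \R^m$ may be viewed as a constant-coefficient (and therefore closed) $n$-volume form on $\R^m$, both Lemma \ref{lemma:Caccioppoli} and Lemma \ref{lemma:Poincare} apply to $f$, and as observed in the remark preceding the statement, their conclusions persist with balls replaced by cubes.

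I would first recast Lemma \ref{lemma:Poincare} in scale-invariant (averaged) form: normalizing by $|Q|$ and $|2Q|$ on both sides of the cube version of the lemma, one obtains
\[
\left(\vint_Q \norm{Df}^n\right)^{1/n} \le C(n,K) \left(\vint_{2Q} \norm{Df}^{n/2}\right)^{2/n}
\]
for every cube $Q$ with $2Q \subset \Omega$. The crucial point is that the Sobolev conjugate of the exponent $n/2$ is exactly $n$, which is what makes this a genuine \emph{reverse} Hölder inequality: the $L^n$-norm on the left is bounded by the strictly smaller $L^{n/2}$-norm on the right.

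Setting $h:=\norm{Df}^{n/2} \in L^2_\loc(\Omega)$, this reads
\[
\left(\vint_Q h^2\right)^{1/2} \le C(n,K)\, \vint_{2Q} h,
\]
which is precisely the hypothesis of Gehring's lemma on the self-improvement of reverse Hölder inequalities over cubes. Gehring's lemma then yields an exponent $s = s(n,K) > 2$ and a constant $C'(n,K,s)$ such that
\[
\left(\vint_Q h^s\right)^{1/s} \le C'(n,K,s) \left(\vint_{2Q} h^2\right)^{1/2}
\]
for every cube $Q \subset 2Q \subset \Omega$. Translating back through $p := ns/2 > n$ and raising to the power $2/n$ gives the desired higher integrability estimate
\[
\left(\vint_Q \norm{Df}^p\right)^{1/p} \le C''(n,K,p) \left(\vint_{2Q} \norm{Df}^n\right)^{1/n}.
\]

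The main obstacle I anticipate is bookkeeping rather than depth: one must carefully track the powers of the side-length $\ell(Q)$ in passing from the unnormalized statement of Lemma \ref{lemma:Poincare} to the averaged form (so that the scaling cancels), and verify that the Caccioppoli and Sobolev--Poincaré ingredients transfer verbatim from balls to cubes. Once these routine checks are in place, the argument reduces to a direct invocation of Gehring's lemma.
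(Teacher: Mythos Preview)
Your proposal is correct and follows essentially the same route as the paper: pass from Lemma~\ref{lemma:Poincare} to the averaged reverse H\"older inequality for $h=\norm{Df}^{n/2}\in L^2_\loc$, then invoke Gehring's lemma to obtain $s>2$ and set $p=ns/2>n$. The only cosmetic difference is that the paper works with $\tfrac{1}{2}Q'\subset Q'$ while you use $Q\subset 2Q$, and you state the conclusion in averaged form with exponent $1/n$ on the right rather than the paper's (somewhat idiosyncratic) $1/p$.
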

\begin{proof}
Let $Q'=Q'(x,r) \subset Q$ be a subcube. Then, by Lemma \ref{lemma:Poincare}, we have that
\begin{align*}
\left( \vint_{\frac{1}{2}Q'} \norm{Df}^n \right)^{1/n} 
&= \left( \frac{1}{|\frac{1}{2}Q'|}\right)^{1/n} \left( \int_{\frac{1}{2}Q'} \norm{Df}^n \right)^{1/n} \\
&\le \left( \frac{1}{|\frac{1}{2}Q'|}\right)^{1/n} \frac{C(n,K)}{|Q'|^{1/n}} \left(\int_{Q'} \norm{Df}^{n/2} \right)^{2/n}  \\
&= C(n,K) \left( \vint_{Q'} \norm{Df}^{n/2} \right)^{2/n}.
\end{align*}

Let now $u = \norm{Df}^{n/2} \in L^2(Q)$. Then, by Gehring's lemma (see e.g.~\cite[Theorem 4.2]{Bojarski-Iwaniec}), there exists $t>2$ and $C_t > 1$ for which 
\[
\left( \int_{\frac{1}{2} Q'} u^t \right)^{1/t} \le C \left( \int_{Q'} u^2 \right)^{1/2}
\]
for each subcube $Q'\subset Q$. Thus $\norm{Df}\in L^p(Q)$ for $p=t n/2> n$.
\end{proof}

\bibliographystyle{abbrv}

\end{document}